\newtheorem{thm}{Theorem} [section]
\newtheorem{prob}[thm]{Problem}
\newtheorem{lem}[thm]{Lemma}
\theoremstyle{definition}
\theoremstyle{remark}
\newtheorem{exa}[thm]{Example}
\numberwithin{equation}{section}
\newcommand{\fq}{{\mathbb F}_{q}}
\newcommand{\fp}{{\mathbb F}_{p}}
\newcommand{\rmv}[1]{}
\def\<{\left\langle}
\def\>{\right\rangle}
\begin{document}

\title[The inverse stability of Artin-Schreier polynomials ]
{The inverse stability of Artin-Schreier polynomials over finite fields}%
\author{Kaimin Cheng}
\address{School of Mathematics and Information, China West Normal University, Nanchong, 637002, P. R. China}
\email{ckm20@126.com}
\subjclass{Primary 11T06}%
\keywords{Finite fields, inverse stability, Artin-Schreier polynomials.}
\date{\today}
\begin{abstract}
Let $p$ be a prime number and $q$ a power of $p$. Let $\fq$ be the finite field with $q$ elements. For a positive integer $n$ and a polynomial $\varphi(X)\in\fq[X]$, let $d_{n,\varphi}(X)$ denote the denominator of the $n$th iterate of $\frac{1}{\varphi(X)}$. The polynomial $\varphi(X)$ is said to be inversely stable over $\fq$ if all polynomials $d_{n,\varphi}(X)$ are irreducible polynomial over $\fq$ and distinct. In this paper, we characterize a class of inversely stable polynomials over $\fq$. More precisely, for $\varphi(X)=X^{p^t}+aX+b\in\fq[X]$ with $t$ being a positive integer, we provide a sufficient and necessary condition for $\varphi(X)$ to be inversely stable over $\fq$.
\end{abstract}

\maketitle
\section{Introduction}
Constructing irreducible polynomials over finite fields is a central problem in finite field theory. Over the past few decades, numerous approaches have been developed to address this issue, particularly focusing on compositions and iterations of polynomials. One approach explores finding polynomials $f(X)$ and $g(X)$ such that $f(g(X))$ is an irreducible polynomial (see \cite{[BGL],[CBB],[CH],[Coh],[KK]}). Another direction of research is identifying polynomials $h$ such that all of its iterates remain irreducible over a given finite field. Such polynomials are called \textit{stable} over the finite field. Despite considerable attention to this topic (see \cite{[AF],[ALOS],[FPR],[GN],[JB]}), results remain sparse and the problem is far from being completely understood. For further discussions on different constructions of irreducible polynomials, one can refer to \cite{[BGM],[FMS],[Gao1],[Gao2],[GP],[GHP],[GK]} .

Let $q$ be a power of a prime number $p$, and let $\fq$ denote the finite field of order $q$. Consider a rational function $\Phi(x)=\frac{1}{\varphi(X)}$, where $\varphi(X)\in\fq[X]$. Denote by $\Phi^{(n)}(X)$ the $n$th iterate of $\Phi(X)$, i.e.,
$$\Phi^{(0)}(X)=X,\ \Phi^{(n)}(X)=\Phi(\Phi^{(n-1)}(X)),\ n=1,2,\ldots.$$
Let $d_{n,\varphi}(X)$ be the denominator of $\Phi^{(n)}(X)$. We say that $\varphi(X)$ is an \textit{inversely stable} polynomial over $\fq$ if all polynomials in the sequence $\{d_{n,\varphi}\}_{n=1}^{\infty}$ are irreducible over $\fq$ and are distinct. 

The \textit{Artin-Schreier polynomials} have the form $X^{p^t} + aX + b$, where $t$ is a positive integer and $a, b \in\fq$. Recently, the author \cite{[Che]} identified a class of inversely stable Artin-Schreier polynomials over the prime field $\fp$ and posed the following problem.
\begin{prob}\label{pro1.1}
Let $q=p^e$ with $e$ a positive integer. Characterize the conditions under which the polynomial $f(X)=X^{p^t}+aX+b\in\fq[X]$, where $t$ is a positive integer, is inversely stable over $\fq$.
\end{prob} 
In the case $e=t=1$, the author provided a complete solution to Problem \ref{pro1.1} (see \cite[Theorem 1.1]{[Che]}). However, for $e=t\ge 2$, it was shown that $f(X)=X^{p^t}+aX+b\in\fq[X]$ is never inversely stable over $\fq$. In this paper, we shall provide a solution to Problem \ref{pro1.1} for the case $e > t=1$. 

Let ${\rm Tr}_{\fq}$ be the absolute trace function from $\fq$ to $\fp$. For an element $\xi\in\fq$ with ${\rm Tr}_{\fq}(\xi)\ne 0$, define three sequences $\{a_n\},\{c_n\}$ and $\{d_n\}$ of $\fq$ as follows:
$$a_1=\xi,\ c_1=1,\ d_1=0;\  a_2=-1,\ c_2=\xi,\ d_2=-1;$$
and for $n\ge 2$,
\begin{align}\label{c1-1}
\begin{cases}
a_{n+1}=-a_nd_n,\\
c_{n+1}=c_n^2(\xi-(c_n^{-1}d_n)^p+c_n^{-1}d_n),\\
d_{n+1}=-c_n^2.
\end{cases}
\end{align}
Since none of the elements in $\{c_n\}$ is zero (as ${\rm Tr}_{\fq}(\xi)\ne 0$), the sequences are well defined. Now we present the main result of this paper.
\begin{thm}\label{thm1.2}
Let $p$ be a prime, and $q=p^e$ with $e$ a positive integer. Let $g(X)=X^p-X+\xi\in\fq[X]$ with ${\rm Tr}_{\fq}(\xi)\ne 0$. Then, $g(X)$ is inversely stable over $\fq$ if and only if ${\rm Tr}_{\fq}(a_nc_n^{-1})\ne 0$ for every positive integer $n$, where the sequences $\{a_n\}$ and $\{c_n\}$ are defined as in (\ref{c1-1}).
\end{thm}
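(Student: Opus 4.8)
The plan is to reduce inverse stability to a chain of Artin--Schreier irreducibility conditions inside a tower of fields, and then to show that the auxiliary recursion (\ref{c1-1}) is nothing but a bookkeeping device for the traces that govern these conditions. Writing $\Phi^{(n)}(X)=N_n(X)/D_n(X)$ and using $\Phi^{(n+1)}=\Phi(\Phi^{(n)})=1/g(\Phi^{(n)})$, a direct computation yields $N_{n+1}=D_n^{\,p}$ and $D_{n+1}=N_n^{\,p}-N_nD_n^{\,p-1}+\xi D_n^{\,p}$, with $N_1=1$, $D_1=g$. A short induction shows $\gcd(N_n,D_n)=1$: if $\pi$ is a common irreducible factor of $N_{n+1},D_{n+1}$, then $\pi\mid D_n$ (since $N_{n+1}=D_n^{\,p}$), and reducing $D_{n+1}$ modulo $D_n$ gives $\pi\mid N_n^{\,p}$, contradicting the inductive hypothesis. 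Hence $d_{n,\varphi}$ is a scalar multiple of $D_n$, and it suffices to study the $D_n$.

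Next I would set up the field-theoretic tower. Since $\theta$ is a pole of $\Phi^{(n+1)}=\Phi^{(n)}\circ\Phi$ exactly when $\Phi(\theta)=1/g(\theta)$ is a root of $D_n$, the roots of $D_{n+1}$ are the solutions of the Artin--Schreier equation $\theta^{p}-\theta=\rho^{-1}-\xi$ as $\rho$ runs over the roots of $D_n$. Assuming inductively that $D_n$ is irreducible and separable of degree $p^n$ with a root $\rho_n$, so $\fq(\rho_n)=\mathbb{F}_{q^{p^n}}$, each of the $p^n$ roots $\rho$ yields exactly $p$ distinct solutions $\theta$ (two solutions differ by an element of $\fp$), and distinct $\rho$ give disjoint fibres because $\theta$ determines $\rho=1/g(\theta)$; thus $D_{n+1}$ has $p^{n+1}$ distinct roots, and together with the degree bound $\deg D_{n+1}\le p^{n+1}$ coming from the recursion this forces $\deg D_{n+1}=p^{n+1}$ with simple roots. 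The standard Artin--Schreier criterion then shows $D_{n+1}$ is irreducible if and only if $X^{p}-X-(\rho_n^{-1}-\xi)$ is irreducible over $\mathbb{F}_{q^{p^n}}$, i.e. if and only if $\mathrm{Tr}_{\mathbb{F}_{q^{p^n}}/\fp}(\rho_n^{-1}-\xi)\ne0$. Using $\mathrm{Tr}_{\mathbb{F}_{q^{p^n}}/\fq}(\xi)=p^{n}\xi=0$ and the scale-invariant identity $\mathrm{Tr}_{\mathbb{F}_{q^{p^n}}/\fq}(\rho_n^{-1})=\sum_{\rho}\rho^{-1}=-D_n'(0)/D_n(0)$ (valid since $D_n(0)\ne0$, which I would deduce from $c_n\ne0$), this condition becomes $\Tr_{\fq}\!\big(D_n'(0)/D_n(0)\big)\ne0$.

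The heart of the argument, and the step I expect to be the main obstacle, is to identify $D_n'(0)/D_n(0)$ with the ratio $a_{n+1}/c_{n+1}$ produced by (\ref{c1-1}). Evaluating the denominator recursion at $X=0$ and invoking the characteristic-$p$ identities $(h^{p})'=0$ and $p-1=-1$ (so that $N_n'\equiv0$), I would derive $D_{n+1}(0)=\nu_n^{\,p}-\nu_n\delta_n^{\,p-1}+\xi\delta_n^{\,p}$, $D_{n+1}'(0)=\nu_n\delta_n^{\,p-2}\delta_n'$ and $N_{n+1}(0)=\delta_n^{\,p}$, where $\delta_n=D_n(0)$, $\delta_n'=D_n'(0)$, $\nu_n=N_n(0)$. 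Setting $R_n^{\ast}=\delta_n'/\delta_n$ and $U_n^{\ast}=\nu_n/\delta_n$, these collapse to $R_{n+1}^{\ast}=U_n^{\ast}R_n^{\ast}/\big((U_n^{\ast})^{p}-U_n^{\ast}+\xi\big)$ and $U_{n+1}^{\ast}=1/\big((U_n^{\ast})^{p}-U_n^{\ast}+\xi\big)$. On the other hand, writing $R_n=a_n/c_n$ and $u_n=d_n/c_n$, the recursion (\ref{c1-1}) gives $u_{n+1}=-1/(\xi-u_n^{\,p}+u_n)$ and $R_{n+1}=-R_nu_n/(\xi-u_n^{\,p}+u_n)$. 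Starting from $R_2=a_2/c_2=-1/\xi=R_1^{\ast}$ and $u_2=d_2/c_2=-1/\xi=-U_1^{\ast}$, one checks by induction that the substitution $R_n=R_{n-1}^{\ast}$, $u_n=-U_{n-1}^{\ast}$ intertwines the two systems: the only delicate point is the identity $\xi-u_n^{\,p}+u_n=(U_{n-1}^{\ast})^{p}-U_{n-1}^{\ast}+\xi$, which one verifies separately for $p=2$ and for odd $p$ because of the sign of $(-U)^{p}$. This gives $a_{n+1}/c_{n+1}=D_n'(0)/D_n(0)$, whence $\Tr_{\fq}(a_{n+1}c_{n+1}^{-1})\ne0$ is exactly the condition of the previous paragraph for $D_{n+1}$ to be irreducible, while $\Tr_{\fq}(a_1c_1^{-1})=\Tr_{\fq}(\xi)\ne0$ is the condition for $D_1=g$ to be irreducible.

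Finally I would assemble these into the stated equivalence by induction on $n$. The conditions $\Tr_{\fq}(a_nc_n^{-1})\ne0$ for all $n\ge1$ hold simultaneously if and only if every $D_n$ is irreducible; and in that case the $D_n$, having pairwise distinct degrees $p^{n}$, are automatically distinct, so $g$ is inversely stable. For the converse, if $n$ is minimal with $\Tr_{\fq}(a_nc_n^{-1})=0$ (necessarily $n\ge2$, since $\Tr_{\fq}(\xi)\ne0$ is assumed), then $D_1,\dots,D_{n-1}$ are irreducible while the failure of the trace condition makes $X^{p}-X-(\rho_{n-1}^{-1}-\xi)$ split over $\mathbb{F}_{q^{p^{n-1}}}$; the roots of $D_n$ then lie in $\mathbb{F}_{q^{p^{n-1}}}$, each of degree at most $p^{n-1}<p^{n}=\deg D_n$, so $D_n$ is reducible, and since $\gcd(N_n,D_n)=1$ the reduced denominator $d_{n,\varphi}$ is reducible. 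Hence $g$ is not inversely stable, completing the proof.
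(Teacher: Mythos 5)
Your proof is correct, but the decisive computation is genuinely different from the paper's. Both arguments share the same skeleton: the poles of $\Phi^{(n+1)}$ arise from those of $\Phi^{(n)}$ by solving an Artin--Schreier equation, so by Lemma \ref{lem2.4} the inductive step reduces to the nonvanishing of an absolute trace over $\fq(\rho_n)\cong\mathbb{F}_{q^{p^n}}$, and inverse stability fails exactly at the first $n$ where that trace vanishes (your converse, with the roots of $D_n$ falling into $\mathbb{F}_{q^{p^{n-1}}}$, matches the paper's necessity argument in substance). The divergence is in how that trace is pulled down to $\fq$. The paper constructs the full tower $K_0\subset K_1\subset\cdots\subset K_n$ from the intermediate iterates $\beta_i=G^{(n-i)}(\beta_n)$ and descends one degree-$p$ Artin--Schreier step at a time via Lemma \ref{lem2.3}, the sequences $(a_n,c_n,d_n)$ being exactly the bookkeeping for the M\"obius coefficients along that descent. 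You instead evaluate the relative trace in one stroke through the logarithmic derivative, $\Tr_{\fq(\rho_n)/\fq}(\rho_n^{-1})=-D_n'(0)/D_n(0)$, and then prove by a separate induction that the recursion (\ref{c1-1}) satisfies $a_{n+1}c_{n+1}^{-1}=D_n'(0)/D_n(0)$; I have checked this identification, including the base case $a_2c_2^{-1}=-\xi^{-1}=D_1'(0)/D_1(0)$, the intertwining substitution $u_n=-U_{n-1}^{\ast}$ with the sign of $(-U)^p$ handled by $(-1)^{p+1}=1$ in characteristic $p$, and the case $n=2$ explicitly. Your route buys a cleaner field-theoretic induction (one Artin--Schreier extension per step, no intermediate tower) and an appealing closed-form meaning for $a_nc_n^{-1}$ as the logarithmic derivative of the previous denominator at $0$; the price is the extra verification that the two recursions agree, together with the housekeeping facts ($\gcd(N_n,D_n)=1$, $D_n(0)\ne 0$, $\deg D_n=p^n$ with distinct roots) that the paper isolates in Lemma \ref{lem2.5}. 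Both proofs are complete and yield the same criterion.
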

Clearly, for a given polynomial $g(X)=X^p-X+\xi\in\fq[X]$ with ${\rm Tr}_{\fq}(\xi)\ne 0$, Theorem \ref{thm1.2} provides an efficient criterion to determine whether $g(X)$ is inversely stable over $\fq$. The conditions presented in the theorem allow straightforward verification of inverse stability by examining the absolute traces of specific sequences derived from the polynomial $g(X)$. Here are some examples.
\begin{exa}\label{exa1.3}
Let $p$ be a prime, and $q=p^e$ with $e$ a positive integer. Let $\xi$ be any nonzero element of $\fp$. It is evident that all the terms in the sequences $\{a_n\}$, $\{c_n\}$ and $\{d_n\}$ lie in $\fp$. We then know that $c_1=1$, and $c_{n+1}=\xi c_n^2$ for any $n\ge 2$, which leads to the expression $c_n=\xi^{2^{n-1}-1}$ for all $n\ge 2$. Consequently, $d_n=-c_{n-1}^2=-\xi^{2^{n-1}-2}$ for any $n\ge 2$. Furthermore, using the recurrence relation $a_{n+1}=-a_nd_n$, $n=2,3,\ldots$, we compute that 
$$a_n=\frac{a_n}{a_{n-1}}\cdot\frac{a_{n-1}}{a_{n-2}}\cdots\frac{a_3}{a_{2}}\cdot a_2=(-d_{n-1})\cdot(-d_{n-2})\cdots(-d_2)\cdot a_2=-\xi^{2^{n-1}-2n+2}$$
for any $n\ge 3$. Therefore, we derive that
$a_nc_n^{-1}=\xi$ if $n=1$, and $a_nc_n^{-1}=-\xi^{3-2n}$ if $n\ge 2$. Thus, we have
$${\rm Tr}_{\fq}(a_nc_n^{-1})=\begin{cases}
e\xi,&\text{if}\ n=1,\\
-e\xi^{3-2n},&\text{if}\ n\ge2.
\end{cases}
$$
Therefore, by Theorem \ref{thm1.2}, $g(X)$ is inversely stable over $\fq$ if and only if $p\nmid e$.
\end{exa}

\begin{exa}\label{exa1.4}
Let $p=3$ and $q=9$. Take $\xi=w$, where $w$ is a primitive element of $\mathbb{F}_9$ and $X^2+2X+2$ is the minimal polynomial over $\mathbb{F}_3$. By direct computations, we have the following:
\begin{align*}
&a_1=w,c_1=1,d_1=0,a_1c_1^{-1}=w,{\rm Tr}_{\mathbb{F}_9}(a_1c_1^{-1})=1,\\
&a_2=2,c_2=w,d_2=2,a_2c_2^{-1}=2w+1,{\rm Tr}_{\mathbb{F}_9}(a_2c_2^{-1})=1,\
\end{align*}
and for $n\ge 1$
$$\begin{cases}
a_{3n}=2,c_{3n}=2w,d_{3n}=2w+2,a_{3n}c_{3n}^{-1}=w+2,{\rm Tr}_{\mathbb{F}_9}(a_{3n}c_{3n}^{-1})=2,\\
a_{3n+1}=2w+2,c_{3n+1}=2w+2,d_{3n+1}=2w+2,a_{3n+1}c_{3n+1}^{-1}=1,{\rm Tr}_{\mathbb{F}_9}(a_{3n+1}c_{3n+1}^{-1})=2,\\
a_{3n+2}=1,c_{3n+2}=2w,d_{3n+2}=1,a_{3n+2}c_{3n+2}^{-1}=2w+1,{\rm Tr}_{\mathbb{F}_9}(a_{3n+2}c_{3n+2}^{-1})=1.
\end{cases}$$
Thus, from Theorem \ref{thm1.2} we conclude that $X^3-X+w$ is an inversely stable polynomial over $\mathbb{F}_9$.
\end{exa}

\begin{exa}\label{exa1.5}
Let $p=5$ and $q=25$. Take $\xi=w$, where $w$ is a primitive element of $\mathbb{F}_{25}$ and $X^2+4X+2$ is the minimal polynomial over $\mathbb{F}_5$. It is computed that $a_8=3w+3$, $c_8=w$, $a_8c_8^{-1}=w+2$ and ${\rm Tr}_{\mathbb{F}_{25}}(a_{8}c_{8}^{-1})=0$. Therefore, by Theorem \ref{thm1.2}, we conclude that $X^5-X+w$ is not inversely stable over $\mathbb{F}_{25}$.
\end{exa}

Theorem \ref{thm1.2} offers a solution to Problem \ref{pro1.1} for the case $e > t = 1$. This case is central, as solutions to all other cases of Problem \ref{pro1.1} can be obtained using the same method as that for the case $e > t = 1$ (see Section 4). Consequently, Problem \ref{pro1.1} is fully resolved in this sense.

The paper is organized as follows: Section 2 presents a series of lemmas that establish the foundation for the main results. Section 3 focuses on proving Theorem \ref{thm1.2}. Section 4 provides some remarks.

\section{Lemmas}
In this section, we introduce several useful lemmas that will form the foundation for proving our main results. Let $K$ be a finite field, and $L$ be an extension field of degree $m$. For $\alpha\in L$, denote by ${\rm Tr}_{L/K}(\alpha)$ the trace of $\alpha$ from $L$ to $K$, that is,
$${\rm Tr}_{L/K}(\alpha)=\alpha+\alpha^{|K|}+\alpha^{|K|^2}+\cdots+\alpha^{|K|^{m-1}},$$
where $|K|$ is the order of $K$. Additionally, the absolute trace of $\alpha$, denoted as ${\rm Tr}_{L}(\alpha)$, is defined by ${\rm Tr}_{L/\fp}(\alpha)$, where $\fp$ is the prime field of $K$.

Throughout this section, we assume that $K$ is a finite field with characteristic $p$. The first two lemmas follow immediately from basic properties of traces and irreducible polynomials.
\begin{lem}\label{lem2.1}\cite[Equation (2.2)]{[LN]}
Let $\alpha$ be an element of an extension field of $K$. Then the coefficient of the second highest term of the minimal polynomial of $\alpha$ over $K$ is equal to $-{\rm Tr}_{K(\alpha)/K}(\alpha)$.
\end{lem}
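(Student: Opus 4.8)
The plan is to exploit the explicit factorization of the minimal polynomial over a finite field into its Galois conjugates and then read off the desired coefficient via Vieta's formulas. Set $q=|K|$ and $m=[K(\alpha):K]$, so that the minimal polynomial $f(X)$ of $\alpha$ over $K$ is monic of degree $m$; write $f(X)=X^m+c_{m-1}X^{m-1}+\cdots+c_0$. The ``second highest term'' is the $X^{m-1}$ term, so the assertion to be proved is precisely that $c_{m-1}=-{\rm Tr}_{K(\alpha)/K}(\alpha)$.

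First I would recall that since $K$ is finite, the extension $K(\alpha)/K$ is separable and normal (every finite extension of a finite field is Galois, with cyclic Galois group generated by the Frobenius $x\mapsto x^{q}$). Consequently the $m$ roots of $f$ are exactly the distinct conjugates
$$\alpha,\ \alpha^{q},\ \alpha^{q^2},\ \ldots,\ \alpha^{q^{m-1}},$$
and $f$ splits completely as $f(X)=\prod_{i=0}^{m-1}\bigl(X-\alpha^{q^{i}}\bigr)$. The only point needing care is that these $m$ powers are genuinely distinct and exhaust the root set; this is exactly where separability enters, ensuring that the number of distinct conjugates equals $\deg f=[K(\alpha):K]$.

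Next I would expand the product and compare coefficients. By Vieta's formulas the coefficient of $X^{m-1}$ is the negative of the sum of the roots, that is
$$c_{m-1}=-\sum_{i=0}^{m-1}\alpha^{q^{i}}.$$
Finally, invoking the definition of the relative trace recorded at the start of Section 2 (with $|K|=q$ and $m=[K(\alpha):K]$), the sum $\sum_{i=0}^{m-1}\alpha^{q^{i}}$ is exactly ${\rm Tr}_{K(\alpha)/K}(\alpha)$, which gives $c_{m-1}=-{\rm Tr}_{K(\alpha)/K}(\alpha)$ and completes the argument.

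I expect no substantial obstacle: the whole content is the identification of the roots of $f$ with the Frobenius orbit of $\alpha$, a standard fact, followed by a one-line coefficient comparison. The only item worth stating explicitly is the separability of $K(\alpha)/K$, which guarantees that the trace as defined (a sum of $m$ terms) coincides with the sum of the roots of the degree-$m$ minimal polynomial.
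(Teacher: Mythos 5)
Your proof is correct and is essentially the standard argument: the paper itself gives no proof of this lemma, deferring to \cite[Equation (2.2)]{[LN]}, and the cited equation in Lidl--Niederreiter is obtained in exactly the way you describe, by writing the minimal (equivalently, characteristic, since $L=K(\alpha)$) polynomial as $\prod_{i=0}^{m-1}\bigl(X-\alpha^{|K|^{i}}\bigr)$ and comparing the $X^{m-1}$ coefficients. Your identification of the root set with the Frobenius orbit of $\alpha$, together with the observation that these $m$ conjugates are distinct precisely because $[K(\alpha):K]=m$, is the one nontrivial point, and you have handled it correctly.
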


\begin{lem}\label{lem2.2}
Let $f(X)$ be a polynomial over $K$ of degree $m$. Define $g(X):=X^mf(\frac{1}{X})$. Then $f(X)$ is irreducible over $K$ if and only if $g(X)$ is irreducible over $K$.
\end{lem}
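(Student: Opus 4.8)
The plan is to reduce the biconditional to a single statement about how polynomial factorizations transform under the reciprocal operation, and to exploit that this operation is an involution on polynomials with nonzero constant term, so that the two implications become mirror images of each other. Writing $f(X)=\sum_{i=0}^{m}a_iX^i$ with $a_m\neq 0$, one has $g(X)=X^mf(1/X)=\sum_{i=0}^{m}a_iX^{m-i}$, so $g$ is obtained from $f$ by reversing the coefficient sequence; in particular its leading coefficient is $a_0=f(0)$ and its constant term is $a_m\neq 0$.

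First I would record two elementary identities for the reciprocal map $h\mapsto h^{*}:=X^{\deg h}h(1/X)$. The multiplicative identity $(h_1h_2)^{*}=h_1^{*}h_2^{*}$ holds as an identity of polynomials, because $\deg(h_1h_2)=\deg h_1+\deg h_2$ lets the factor $X^{\deg h_1+\deg h_2}$ split and attach to the two evaluations at $1/X$. The involution identity $(h^{*})^{*}=h$ holds whenever $h(0)\neq 0$, since then $\deg h^{*}=\deg h$ and reversing coefficients twice is the identity. Note that $g=f^{*}$ by definition.

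Next I would settle the degrees. For $m=1$ both $f$ and $g$ are of degree at most one and hence irreducible, apart from the excluded case $f=aX$; so assume $m\ge 2$. If $f$ is irreducible of degree $m\ge 2$ then $X\nmid f$, whence $f(0)=a_0\neq 0$ and $\deg g=m$; likewise, if we assume instead that $g$ is irreducible of degree $m$, then its leading coefficient $a_0=f(0)$ is nonzero. In either case $f(0)\neq 0$, so on the polynomials at hand the map $h\mapsto h^{*}$ is a degree-preserving involution, and it suffices to prove just one implication.

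Finally I would argue by contraposition using multiplicativity. If $f$ is reducible, write $f=f_1f_2$ with $1\le\deg f_i<m$; since $f(0)=f_1(0)f_2(0)\neq 0$, both $f_i(0)$ are nonzero, hence $\deg f_i^{*}=\deg f_i$, and $g=f^{*}=f_1^{*}f_2^{*}$ exhibits $g$ as a product of two factors of degrees strictly between $0$ and $m$, so $g$ is reducible. Applying the identical argument to $g$, and using $g^{*}=f$, shows conversely that reducibility of $g$ forces reducibility of $f$. The equivalence of irreducibility then follows. I expect the only real obstacle to be the degree bookkeeping: one must guarantee that $\deg g=m$ and that each proper factor of $f$ retains a nonzero constant term, so that $h\mapsto h^{*}$ neither drops degrees nor collapses a proper factorization into a trivial one; the factorization $f(0)=f_1(0)f_2(0)$ of constant terms is precisely what secures this.
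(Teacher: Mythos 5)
Your proof is correct, and it supplies exactly the argument the paper omits: the paper gives no proof of Lemma \ref{lem2.2} at all, remarking only that the first two lemmas ``follow immediately from basic properties of traces and irreducible polynomials,'' and the standard argument being alluded to is precisely your reversal map $h \mapsto h^{*} = X^{\deg h}h(1/X)$, together with its multiplicativity and its involutivity on polynomials with nonzero constant term. Moreover, your degree bookkeeping is not pedantry but a necessity: as literally stated, the lemma is false when $X \mid f$ --- for example $f(X) = X^{2} + X$ gives $g(X) = X + 1$, which is irreducible while $f$ is not, and $f(X) = aX$ gives the non-irreducible constant $g(X) = a$ --- so the equivalence genuinely requires the implicit hypothesis $f(0) \neq 0$ (equivalently $\deg g = m$), which you isolate explicitly. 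This restriction is harmless for the paper's purposes: in the proof of Lemma \ref{lem2.3} the lemma is applied only to $f(X - c^{-1}d) = X^{p} - X + \xi - (d/c)^{p} + d/c$, an irreducible polynomial of degree $p \geq 2$ whose constant term is therefore automatically nonzero, so only your forward direction with $m \geq 2$ is ever invoked.
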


\begin{lem}\label{lem2.3}
Let $f(X)=X^p-X+\xi\in K[X]$ be irreducible over $K$, and  let $\gamma$ be a root in an extension field of $K$. For any $a,b,c,d\in K$ with $c$ and $d$ not both zero, the following hold:
\begin{enumerate}[label=(\alph*), left=0pt]
 \item If $c=0$, then ${\rm Tr}_{K(\gamma)/K}(\frac{a\gamma+b}{c\gamma+d})=0$ if $p\ge 3$, and ${\rm Tr}_{K(\gamma)/K}(\frac{a\gamma+b}{c\gamma+d})=\frac{a}{d}$ if $p=2$.
 \item If $c\ne 0$, then 
$${\rm Tr}_{K(\gamma)/K}\left(\frac{a\gamma+b}{c\gamma+d}\right)=\frac{bc-ad}{c^2(\xi-(\frac{d}{c})^p+\frac{d}{c})}.$$
\end{enumerate}
\end{lem}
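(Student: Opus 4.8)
The plan is to compute the trace directly as a sum over the $K$-conjugates of $\gamma$, exploiting the special shape of the roots of $f$. Since $f(X)=X^p-X+\xi$ is irreducible and $(\gamma+i)^p-(\gamma+i)+\xi=\gamma^p-\gamma+\xi+(i^p-i)=0$ for every $i\in\fp$, the $p$ roots of $f$ are exactly $\{\gamma+i:i\in\fp\}$. Because $K(\gamma)/K$ is Galois (a separable splitting field of $f$) of degree $p$, these are precisely the conjugates of $\gamma$, and for the rational function $R(X)=\frac{aX+b}{cX+d}$ one has
$$
{\rm Tr}_{K(\gamma)/K}\!\left(R(\gamma)\right)=\sum_{i\in\fp}R(\gamma+i),
$$
which is legitimate since $c(\gamma+i)+d\neq 0$ for all $i$ (as $\gamma+i\notin K$ while $-d/c\in K$).

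For part (a), with $c=0$ the sum collapses to $\frac1d\sum_{i\in\fp}\bigl(a\gamma+ai+b\bigr)=\frac1d\bigl(pa\gamma+pb+a\sum_{i\in\fp}i\bigr)$. In characteristic $p$ the first two terms vanish, and $\sum_{i\in\fp}i$ equals $0$ when $p\ge 3$ and $1$ when $p=2$, giving the two stated values $0$ and $a/d$.

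For part (b), with $c\neq 0$ I would first split off the constant part of $R$, writing
$$
R(\gamma+i)=\frac{a}{c}+\frac{bc-ad}{c}\cdot\frac{1}{c(\gamma+i)+d}=\frac{a}{c}+\frac{bc-ad}{c^2}\cdot\frac{1}{\gamma+i+\frac dc}.
$$
Summing over $i\in\fp$ kills the constant term (since $p\cdot\frac ac=0$), leaving $\frac{bc-ad}{c^2}\,S$ with $S=\sum_{i\in\fp}(\delta+i)^{-1}$ and $\delta=\gamma+\frac dc$. The key computation is the partial-fraction identity $S=-\,(\delta^p-\delta)^{-1}$, obtained by taking the logarithmic derivative of $\prod_{i\in\fp}(X+i)=X^p-X$ and using $(X^p-X)'=-1$; this is valid because $\delta\notin\fp$, so $\delta^p\neq\delta$. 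Substituting $\gamma^p-\gamma=-\xi$ then gives $\delta^p-\delta=-\xi+(d/c)^p-d/c=-\bigl(\xi-(d/c)^p+d/c\bigr)$, whence $S=\bigl(\xi-(d/c)^p+d/c\bigr)^{-1}$ and the whole expression becomes the claimed $\dfrac{bc-ad}{c^2(\xi-(d/c)^p+d/c)}$.

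The only genuine obstacle is this partial-fraction identity together with the attendant characteristic-$p$ cancellations; once $\sum_{i\in\fp}(\delta+i)^{-1}=-(\delta^p-\delta)^{-1}$ is in hand the rest is bookkeeping, and no separate treatment of the degenerate case $bc-ad=0$ is needed since the formula then reads $0$, matching ${\rm Tr}_{K(\gamma)/K}(a/c)=p\cdot\frac ac=0$. A parallel route would instead invert the M\"obius map to get $\gamma=\frac{-d\alpha+b}{c\alpha-a}$, clear denominators in $\gamma^p-\gamma+\xi=0$ to produce the degree-$p$ minimal polynomial of $\alpha=R(\gamma)$, and read the trace off Lemma \ref{lem2.1} as minus the ratio of its two leading coefficients; this also works, but it requires verifying that the leading coefficient is nonzero and carefully tracking $\binom{p-1}{p-2}\equiv-1\pmod p$, which makes the conjugate-sum argument the cleaner choice.
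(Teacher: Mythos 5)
Your proof is correct, but the key step is carried out differently from the paper. Both arguments begin the same way for part (b), splitting off the constant via $\frac{a\gamma+b}{c\gamma+d}=\frac{a}{c}+\frac{bc-ad}{c^2}\cdot\frac{1}{\gamma+d/c}$ and killing $\operatorname{Tr}(a/c)=p\cdot\frac{a}{c}=0$; the difference lies in how $\operatorname{Tr}_{K(\gamma)/K}\bigl((\gamma+d/c)^{-1}\bigr)$ is evaluated. The paper stays at the level of minimal polynomials: it observes that $f(X-c^{-1}d)$ is the minimal polynomial of $\gamma+c^{-1}d$, applies the reversal $X^p f(1/X - c^{-1}d)$ (its Lemma \ref{lem2.2}) to get the minimal polynomial of the inverse, and reads the trace off the second-highest coefficient via Lemma \ref{lem2.1} — essentially the ``parallel route'' you sketch and set aside at the end. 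You instead use that $K(\gamma)/K$ is Galois with conjugates $\{\gamma+i: i\in\mathbb{F}_p\}$ and evaluate the conjugate sum through the logarithmic-derivative identity $\sum_{i\in\mathbb{F}_p}(X+i)^{-1}=-(X^p-X)^{-1}$, then substitute $\gamma^p-\gamma=-\xi$. Your route is self-contained (it does not need Lemmas \ref{lem2.1} or \ref{lem2.2} at all) and makes the nonvanishing of the denominator $\xi-(d/c)^p+d/c$ transparent, since it is exactly $-(\delta^p-\delta)$ with $\delta\notin\mathbb{F}_p$; the paper's route avoids any appeal to the Galois structure and fits the toolkit it has already set up. Your treatment of part (a), summing $\frac{1}{d}(a\gamma+ai+b)$ over $i\in\mathbb{F}_p$ and using $\sum_i i=0$ for $p\ge 3$ and $=1$ for $p=2$, likewise matches the paper's one-line appeal to linearity and Lemma \ref{lem2.1}. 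All the side conditions you check (nonvanishing of $c(\gamma+i)+d$, the degenerate case $bc-ad=0$) are handled correctly.
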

\begin{proof}
The result of Item (a) follows immediately from the linearity of trace and Lemma \ref{lem2.1}. For Item (b), assume $c\ne0$. Using the linearity of trace, one deduces that
\begin{align}\label{c2-1}
{\rm Tr}_{K(\gamma)/K}\left(\frac{a\gamma+b}{c\gamma+d}\right)&=
{\rm Tr}_{K(\gamma)/K}\left(\frac{a}{c}+\frac{bc^{-1}-adc^{-2}}{\gamma+c^{-1}d}\right)\nonumber\\
&={\rm Tr}_{K(\gamma)/K}\left(\frac{a}{c}\right)+{\rm Tr}_{K(\gamma)/K}\left(\frac{bc^{-1}-adc^{-2}}{\gamma+c^{-1}d}\right)\nonumber\\
&=c^{-2}(bc-ad){\rm Tr}_{K(\gamma)/K}\left(\frac{1}{\gamma+c^{-1}d}\right).
\end{align}
Note that $f(X)$ is the minimal polynomial of $\gamma$ over $K$. It implies that 
$$f(X-c^{-1}d)=(X-c^{-1}d)^p-(X-c^{-1}d)+\xi=X^p-X+\xi-\left(\frac{d}{c}\right)^p+\frac{d}{c}$$
 is the minimal polynomial of $\gamma+c^{-1}d$ over $K$. By Lemma \ref{lem2.2} we then know that 
 $$X^pf\left(\frac{1}{X}-c^{-1}d\right)=\left(\xi-\left(\frac{d}{c}\right)^p+\frac{d}{c}\right)X^p-X^{p-1}+1$$
is irreducible over $K$, and then $X^p-(\xi-(\frac{d}{c})^p+\frac{d}{c})^{-1}X^{p-1}+(\xi-(\frac{d}{c})^p+\frac{d}{c})^{-1}$ is the minimal polynomial of $(\gamma+c^{-1}d)^{-1}$ over $K$. It follows from Lemma \ref{lem2.1} that 
$${\rm Tr}_{K(\gamma)/K}\left(\frac{1}{\gamma+c^{-1}d}\right)=\left(\xi-\left(\frac{d}{c}\right)^p+\frac{d}{c}\right)^{-1}.$$
Putting this into (\ref{c2-1}), we derive the desired result.
\end{proof}

\begin{lem}\label{lem2.4}\cite[Corollary 3.79]{[LN]}
For $f(X)=X^p-X+\xi\in K[X]$, we have that $f(X)$ is irreducible over $K$ if and only if ${\rm Tr}_{K}(\xi)\ne 0$.
\end{lem}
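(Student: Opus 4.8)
The statement is the classical Artin--Schreier irreducibility criterion, and the plan is to analyze the action of the Frobenius automorphism on the roots of $f$. First I would record the basic shift symmetry of the polynomial: if $\gamma$ is a root of $f(X)=X^p-X+\xi$ in a splitting field, then for every $j\in\fp$ we have $(\gamma+j)^p-(\gamma+j)+\xi=\gamma^p+j-\gamma-j+\xi=0$, using $j^p=j$. Thus $\gamma,\gamma+1,\dots,\gamma+(p-1)$ are $p$ distinct roots, so they are exactly the roots of $f$, and in particular every root of $f$ differs from $\gamma$ by an element of the prime field $\fp$. This shift symmetry is the structural heart of the argument.

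Next I would use it to force an all-or-nothing splitting behaviour over $K$. Write $q=|K|=p^e$ and let $\sigma\colon x\mapsto x^q$ be the Frobenius generator of ${\rm Gal}(K(\gamma)/K)$. Since $\sigma$ fixes the coefficients of $f$, the image $\sigma(\gamma)$ is again a root of $f$, whence $\sigma(\gamma)=\gamma+c$ for some $c\in\fp$ by the previous paragraph. As $c\in\fp\subseteq K$ is fixed by $\sigma$, an easy induction gives $\sigma^k(\gamma)=\gamma+kc$ for all $k\ge 1$. The degree $[K(\gamma):K]$ is the least $k$ with $\sigma^k(\gamma)=\gamma$, i.e.\ the least $k$ with $kc=0$ in $\fp$. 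Hence if $c=0$ then $\gamma\in K$ and $f$ splits into linear factors over $K$, while if $c\ne 0$ the least such $k$ equals $p$, so $[K(\gamma):K]=p$ and $f$ is irreducible over $K$. Therefore $f$ is irreducible precisely when $c=\gamma^q-\gamma\ne 0$.

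Finally I would identify $c$ with the trace by a telescoping computation. From $f(\gamma)=0$ we have $\gamma^p-\gamma=-\xi$, and applying the additive homomorphism $x\mapsto x^{p^i}$ gives $\gamma^{p^{i+1}}-\gamma^{p^i}=(\gamma^p-\gamma)^{p^i}=-\xi^{p^i}$ for each $i$, where one uses $(-\xi)^{p^i}=-\xi^{p^i}$ in characteristic $p$. Summing over $i=0,1,\dots,e-1$ telescopes to
\begin{equation*}
c=\gamma^{q}-\gamma=\gamma^{p^e}-\gamma=\sum_{i=0}^{e-1}\bigl(\gamma^{p^{i+1}}-\gamma^{p^i}\bigr)=-\sum_{i=0}^{e-1}\xi^{p^i}=-{\rm Tr}_{K}(\xi).
\end{equation*}
Combining this with the previous step, $f$ is irreducible over $K$ if and only if ${\rm Tr}_{K}(\xi)\ne 0$, as claimed. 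The argument is short, and there is no genuine obstacle; the only points requiring care are verifying that the Frobenius shift $c$ really lands in the prime field $\fp$ (so the orbit $\{\gamma+kc\}$ stays inside $\{\gamma+j:j\in\fp\}$) and the sign bookkeeping in the telescoping identity. The main content is thus the structural dichotomy that $f$ either remains irreducible or splits completely over $K$.
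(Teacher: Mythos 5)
Your proof is correct: the shift symmetry $\gamma\mapsto\gamma+j$, the Frobenius dichotomy $\sigma(\gamma)=\gamma+c$ with $c\in\fp$ forcing $f$ to either split completely or stay irreducible, and the telescoping identity $c=\gamma^q-\gamma=-{\rm Tr}_K(\xi)$ are all carried out accurately, including the sign check in characteristic $2$. Note that the paper offers no proof of this lemma at all — it is quoted directly from \cite[Corollary 3.79]{[LN]} — so there is nothing internal to compare against; your argument is the standard classical one underlying that citation, and it correctly supplies the self-contained justification the paper omits.
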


\begin{lem}\label{lem2.5}
Let $\overline{K}$ be the algebraic closure of $K$. Let $\infty$ be the point at infinity, i.e., $\infty=\frac{1}{0}$. For a polynomial $g(X)=X^p-X+\xi\in K[X]$, consider the rational map $G(X):=\frac{1}{g(X)}$ from $\overline{K}\cup\{\infty\}$ to itself. Then both of the following hold.
\begin{enumerate}[label=(\alph*), left=0pt]
    \item The map $G(X)$ is $p$-to-one, except that the preimage of $0$ is $\infty$.
    \item If ${\rm Tr}_{K}(\xi)\ne 0$, then $d_{n,g}(X)$ is a polynomial over $K$ of degree $p^n$ for every positive integer $n$, where $d_{n,g}(X)$ is the denominator of the $n$th iterate $G^{(n)}(X)$ of $G(X)$
\end{enumerate}
\end{lem}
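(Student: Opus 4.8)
The plan is to treat the two parts separately, deriving (a) from an elementary fiber count and then using (a) crucially in (b).

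For (a), I would analyze the fibers of the degree-$p$ map $G(X)=1/(X^p-X+\xi)$ on $\overline{K}\cup\{\infty\}$ case by case. If $y=0$, then $G(x)=0$ forces $g(x)=\infty$, so $x=\infty$ is the only preimage. If $y=\infty$, the fiber is the zero set of $g$, and since the derivative $g'(X)=-1$ the polynomial $g$ is separable of degree $p$, giving exactly $p$ distinct roots in $\overline{K}$ (with $\infty$ excluded because $G(\infty)=0$). If $y\in\overline{K}\setminus\{0\}$, the equation $G(x)=y$ rearranges to $x^p-x=y^{-1}-\xi$, again separable of degree $p$, hence $p$ distinct finite solutions. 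This yields the $p$-to-one statement with the single exceptional fiber $G^{-1}(0)=\{\infty\}$.

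For (b), I would first set up the exact numerator/denominator recursion. Writing $G^{(n-1)}=N_{n-1}/d_{n-1}$ in lowest terms and substituting into $G=1/g$ gives
\[G^{(n)}=\frac{d_{n-1}^{\,p}}{N_{n-1}^{\,p}-N_{n-1}d_{n-1}^{\,p-1}+\xi\,d_{n-1}^{\,p}},\]
so that $N_n=d_{n-1}^{\,p}$ and $d_n=N_{n-1}^{\,p}-N_{n-1}d_{n-1}^{\,p-1}+\xi\,d_{n-1}^{\,p}$, both visibly in $K[X]$. I would then check $\gcd(N_n,d_n)=1$ by induction: reducing modulo $d_{n-1}$ gives $d_n\equiv N_{n-1}^{\,p}$, so any common prime factor of $d_{n-1}$ and $d_n$ would divide $N_{n-1}$, contradicting $\gcd(N_{n-1},d_{n-1})=1$; since $N_n=d_{n-1}^{\,p}$ this forces $\gcd(N_n,d_n)=1$. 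Hence $G^{(n)}=N_n/d_n$ is already in lowest terms and $d_{n,g}=d_n$, so it only remains to show $\deg d_n=p^n$.

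For the degree I would induct, exploiting that $N_n=d_{n-1}^{\,p}$ gives $\deg N_n=p\deg d_{n-1}$ unconditionally; thus $\deg d_{n-1}=p^{n-1}$ immediately yields $\deg N_n=p^n$, and the whole issue reduces to showing there is no degree drop, i.e. $\deg d_n=\deg N_n$. This is exactly the main obstacle: a priori the three top-degree terms of $d_n$ could cancel, and the leading-coefficient recursion is nonlinear and unpleasant to control directly. The clean way around it is to observe that for a reduced fraction $A/B$ one has $\deg A=\deg B$ precisely when the value at $\infty$ is finite and nonzero, so it suffices to prove $G^{(n)}(\infty)\in K\setminus\{0\}$. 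Here (a) and Lemma \ref{lem2.4} combine: since ${\rm Tr}_K(\xi)\ne0$ makes $g$ irreducible, $g$ has no root in $K$, so $x\in K$ implies $g(x)\ne0$ and hence $G(x)\in K$; as $G(\infty)=0\in K$, the forward orbit $x_m:=G^{(m)}(\infty)$ stays in $K$ for all $m\ge1$, whence $x_m\ne\infty$. Moreover, for $m\ge2$ part (a) rules out $x_m=0$, which would force $x_{m-1}=\infty$. Thus $G^{(m)}(\infty)=x_m\in K\setminus\{0\}$, giving $\deg d_m=\deg N_m=p^m$, and with the base case $d_{1}=g$ of degree $p$ the induction closes.
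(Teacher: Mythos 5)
Your part (a) is the same fiber-by-fiber separability count as the paper's. Your part (b), however, takes a genuinely different route, and it is correct. The paper works backwards from $\infty$: it shows $\infty\notin (G^{(i)})^{-1}(\infty)$ for all $i\le n$ (using irreducibility of $g$ via Lemma \ref{lem2.4} to rule out a finite orbit point being a root of $g$), so that by part (a) each pullback multiplies the fiber size by $p$; this yields $p^n$ distinct finite poles of $G^{(n)}$ and hence $\deg d_{n,g}\ge p^n$, matched against the trivial upper bound. You instead work forwards from $\infty$: you write down the explicit recursion $N_n=d_{n-1}^p$, $d_n=N_{n-1}^p-N_{n-1}d_{n-1}^{p-1}+\xi d_{n-1}^p$, verify coprimality by induction, and convert the possible top-degree cancellation in $d_n$ into the statement that $G^{(n)}(\infty)$ is finite and nonzero, which you get from the forward orbit of $\infty$ staying in $K\setminus\{0\}$ for $m\ge 2$ (again via irreducibility of $g$ and part (a)). The two arguments use the same two ingredients in dual ways. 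Yours is more explicit about why $N_n/d_n$ is already in lowest terms (the paper leaves the identification of $d_{n,g}$ with the reduced denominator implicit), while the paper's count additionally shows that $d_{n,g}$ has $p^n$ \emph{distinct} roots, i.e.\ is separable, which your degree computation does not give; neither extra is needed for the lemma as stated. One small presentational point: state explicitly that the base case $n=1$ (where $G^{(1)}(\infty)=0$ and $\deg N_1=0\ne\deg d_1$) is excluded from the value-at-infinity argument and handled by $d_1=g$ directly, as you do at the end.
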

\begin{proof}
First, let us prove Item (a). Clearly, for the map $G(X)$ the preimage of $0$ is $\infty$. Now let $\gamma$ be any fixed element of $\{\infty\}\cup\overline{K}\setminus\{0\}$. Note that the polynomial $g(X)-\frac{1}{\gamma}$ has no multiple roots, and then $g(X)-\frac{1}{\gamma}$ has exactly $p$ distinct roots in $\overline{K}$. Thus, the preimage set of $\gamma$ has order $p$, and the result of Item (a) follows.

Next, we prove Item (b). Let $n$ be a given positive integer. It is clear that $\deg(d_{n,g}(X))\le p^n$. Now we show that $\deg(d_{n,g}(X))\ge p^n$, completing the proof of Item (b). To do this, we define $n$ sets:
$$(G^{(i)})^{-1}(\infty):=\{x\in\overline{K}\cup\{\infty\}: G^{(i)}(x)=\infty\},\ 1\le i\le n.$$
We claim that $\infty\not\in(G^{(i)})^{-1}(\infty)$ for any $1\le i\le n$. Suppose that $\infty\in(G^{(i)})^{-1}(\infty)$ for some $i$, and let $k$ be the smallest positive integer with this property, that is,
$$G^{(1)}(\infty)\ne \infty,\ldots,G^{(k-1)}(\infty)\ne \infty,G^{(k)}(\infty)= \infty.$$
Let $\alpha_j:=G^{(j)}(\infty)$ for each $j$ with $1\le j\le k$. Then $\alpha_1,\ldots,\alpha_{k-1}$ are elements in $K$, and $g(\alpha_{k-1})=0$, implying that $g(X)$ has a root of $K$. However, by Lemma \ref{lem2.4}, $g(X)$ is irreducible over $K$ when ${\rm Tr}_{K}(\xi)\ne 0$, a contradiction. Therefore, $\infty\not\in(G^{(i)})^{-1}(\infty)$ for every $i$ with $1\le i\le n$. Now applying the result from Item (a), we see that
$$\#G^{(1)}(\infty)=p,\ \text{and}\ \#G^{(i+1)}(\infty)=p\#G^{(i)}(\infty)\ \text{for\ all}\ 1\le i\le n-1.$$
Thus, $\#G^{(n)}(\infty)=p^n$, which implies that $d_{n,g}(X)$ has exactly $p^n$ distinct roots in $\overline{K}$. Therefore, $\deg(d_{n,g}(X))\ge p^n$, completing the proof of Item (b). 

This concludes the proof of Lemma \ref{lem2.5}.
\end{proof}

\section{Proof of Theorem 1.2}
In this section, we primarily provide the proof of Theorem \ref{thm1.2}. \\
\textit{Proof of Theorem \ref{thm1.2}}. Let $g(X)=X^p-X+\xi\in\fq[x]$ with ${\rm Tr}_{\fq}(\xi)\ne 0$. Let $\{a_n\},\{c_n\}$ and $\{d_n\}$ be the sequences over $\fq$ defined as in (\ref{c1-1}). Let $G(X)=\frac{1}{g(X)}$, and for any positive integer $n$, let $G^{(n)}(X)$ be the $n$th iterate of $G(X)$. The denominator of $G^{(n)}(X)$ is denoted by $d_{n,g}(X)$. We aim to prove that $g(X)$ is inversely stable over $\fq$ if and only if ${\rm Tr}_{\fq}(a_nc_n^{-1})\ne 0$ for every positive integer $n$. 

First, we prove the necessity.  Let $g(X)=X^p-X+\xi$ be an inversely stable polynomial over $\fq$. We are going to prove that ${\rm Tr}_{\fq}(a_nc_n^{-1})\ne 0$ for every positive integer $n$. Let $n$ be any fixed positive integer. Note that $d_{n,g}(X)$ is irreducible over $\fq$, and it is of degree $p^n$ from Lemma \ref{lem2.5}. Let $\beta_n$ be a root of $d_{n,g}(X)$ in the algebraic closure of $\fq$. Then $\fq(\beta_n)$ is an extension field of degree $p^n$. For every integer $i$ with $1\le i\le n-1$, define $\beta_i:=G^{(n-i)}(\beta_n)$. One then readily finds that
\begin{align}\label{c3-1}
&\beta_1^p-\beta_1+\xi=0,\ \text{and}\\ \label{c3-2}
&\beta_i^p-\beta_i+\frac{\xi\beta_{i-1}-1}{\beta_{i-1}}=0\ \text{for\ any}\ 2\le i\le n.
\end{align}
Now define a sequence $\{K_i\}_{i=0}^n$ of finite fields by
$$K_0=\fq,\ \text{and}\ K_{i}=K_{i-1}(\beta_{i})\ \text{for}\ 1\le i\le n.$$
Let $h_1(X)=X^p-X+\xi$, and for $2\le j\le n$, $h_{j}(X)=X^p-X+\frac{\xi\beta_{j-1}-1}{\beta_{j-1}}$. Clearly, $h_i(X)\in K_{i-1}[X]$ and $h_i(\beta_i)=0$ for any $1\le i\le n$. It implies that $[K_i:K_{i-1}]\le p$ for each $1\le i\le n$. So, we obtain that 
$$p^n=[\fq(\beta_n):\fq]=[K_n:K_0]=\prod_{i=1}^n[K_i:K_{i-1}]\le p^n,$$
and then $[K_i:K_{i-1}]=p$ for any $1\le i\le n$. It follow that $h_i(X)$ is irreducible over $K_{i-1}$ for each $1\le i\le n$. In particular, $h_n(X)=X^p-X+\frac{\xi\beta_{n-1}-1}{\beta_{n-1}}$ is irreducible over $K_{n-1}$. Applying Lemma \ref{lem2.4}, we then have
$${\rm Tr}_{K_{n-1}}\left(\frac{\xi\beta_{n-1}-1}{\beta_{n-1}}\right)\ne 0.$$
On the other hand, noting that each $h_i(X)$ is irreducible over $K_{i-1}$, and employing Lemma \ref{lem2.3} again and again, we compute that
\begin{align*}
&{\rm Tr}_{{K_{n-1}}/K_{n-2}}\left(\frac{\xi\beta_{n-1}-1}{\beta_{n-1}}\right)=\frac{a_2\beta_{n-2}}{c_2\beta_{n-2}+d_2},\\
&{\rm Tr}_{{K_{n-2}}/K_{n-3}}\left(\frac{a_2\beta_{n-2}}{c_2\beta_{n-2}+d_2}\right)=\frac{a_3\beta_{n-3}}{c_3\beta_{n-3}+d_3},\\
&\vdots\\
&{\rm Tr}_{{K_{2}}/K_{1}}\left(\frac{a_{n-2}\beta_{2}}{c_{n-2}\beta_{2}+d_{n-2}}\right)=\frac{a_{n-1}\beta_{1}}{c_{n-1}\beta_{1}+d_{n-1}},\\
&{\rm Tr}_{{K_{1}}/K_{0}}\left(\frac{a_{n-1}\beta_{1}}{c_{n-1}\beta_{1}+d_{n-1}}\right)=a_nc_n^{-1}.
\end{align*}
Therefore, by the transitivity of trace we arrive at 
$${\rm Tr}_{K_0}(a_nc_n^{-1})={\rm Tr}_{K_{n-1}}\left(\frac{\xi\beta_{n-1}-1}{\beta_{n-1}}\right)\ne 0.$$
The necessity is proved.

Next, we focus on the sufficiency. Let $\xi\in\fq$ satisfy that ${\rm Tr}_{\fq}(a_nc_n^{-1})\ne 0$ for any $n\ge 1$. We shall prove that $g(X)=X^p-X+\xi$ is inversely stable over $\fq$. Let $n$ be any given positive integer. By Lemma \ref{lem2.5}, we know that $d_{n,g}(X)$ is of degree $p^n$, so we only need to prove that $d_{n,g}(X)$ is irreducible over $\fq$, which will be done in what follows. Let $\beta_i$, $K_i$ and $h_i(X)$ be defined as in the proof of necessity. First, we show that $h_{i}(X)$ is irreducible over $K_{i-1}$ for any $1\le i\le n$. Obviously, $h_1(X)$ is irreducible over $K_0$, and we need to prove that $h_{i}(X)$ is irreducible over $K_{i-1}$ for any $2\le i\le n$, which is equivalent to that
\begin{align}\label{c3-3}
{\rm Tr}_{K_{i-1}}\left(\frac{\xi\beta_{i-1}-1}{\beta_{i-1}}\right)\ne 0
\end{align}
for any $2\le i\le n$. Now we prove (\ref{c3-3}) by mathematical induction on $i$. 
\begin{itemize}[left=1.5em] 
\item For $i=2$, noting that $h_1(X)$ is irreducible over $K_0$, $h_1(\beta_1)=0$ and $K_1=K_0(\beta)$, by using Lemma \ref{lem2.3} one then checks that
$${\rm Tr}_{K_{1}/K_0}\left(\frac{\xi\beta_{1}-1}{\beta_{1}}\right)=a_2c_2^{-1},$$
and then
$${\rm Tr}_{K_{1}}\left(\frac{\xi\beta_{1}-1}{\beta_{1}}\right)={\rm Tr}_{K_0}(a_2c_2^{-1})\ne 0.$$
It is to say that (\ref{c3-3}) is true for $i=2$.
\item Assume that (\ref{c3-3}) is true for all $i$ with $i\le s-1$. It implies from Lemma \ref{lem2.4} that $h_i(X)$ is irreducible over $K_{i-1}$ for each $1\le i\le s-1$.  Since $K_i=K_{i-1}(\beta_i)$ and $h_i(\beta_i)=0$ for any $1\le i\le s-1$, by using Lemma \ref{lem2.3} repeatedly we derive that
\begin{align*}
&{\rm Tr}_{{K_{s-1}}/K_{s-2}}\left(\frac{\xi\beta_{s-1}-1}{\beta_{s-1}}\right)=\frac{a_2\beta_{s-2}}{c_2\beta_{s-2}+d_2},\\
&{\rm Tr}_{{K_{s-2}}/K_{s-3}}\left(\frac{a_2\beta_{s-2}}{c_2\beta_{s-2}+d_2}\right)=\frac{a_3\beta_{s-3}}{c_3\beta_{s-3}+d_3},\\
&\vdots\\
&{\rm Tr}_{{K_{2}}/K_{1}}\left(\frac{a_{s-2}\beta_{2}}{c_{s-2}\beta_{2}+d_{s-2}}\right)=\frac{a_{s-1}\beta_{1}}{c_{s-1}\beta_{1}+d_{s-1}},\\
&{\rm Tr}_{{K_{1}}/K_{0}}\left(\frac{a_{s-1}\beta_{1}}{c_{s-1}\beta_{1}+d_{s-1}}\right)=a_sc_s^{-1}.
\end{align*}
It then follows from the transitivity of trace that
$${\rm Tr}_{K_{s-1}}\left(\frac{\xi\beta_{s-1}-1}{\beta_{s-1}}\right)={\rm Tr}_{K_{0}}(a_sc_s^{-1})\ne 0.$$
Hence, (\ref{c3-3}) is still true for $i=s$, and then (\ref{c3-3}) holds for any $2\le i\le n$.
\end{itemize}
Therefore, each $h_i(X)$ is the minimal polynomial of $\beta_i$ over $K_{i-1}$. It gives that $[K_i:K_{i-1}]=p$ for all $1\le i\le n$, and then we have 
$$[K_n:K_0]=\prod_{i=1}^n[K_i:K_{i-1}]=p^n.$$
From (\ref{c3-2}), one sees that $K_n=K_0(\beta_n)$, so that $\beta_n$ is of degree $p^n$ over $K_0$. Note that $d_{n,g}(X)$ is a polynomial over $K_0$ of degree $p^n$ and $d_{n,g}(\beta_n)=0$. Thus, we derive that $d_{n,g}(X)$ is irreducible over $K_0$, and the sufficiency is proved.

This completes the proof of Theorem \ref{thm1.2}.\hfill \qed

\section{Conclusion and Remarks}

In this paper, we provided a complete characterization of the inverse stability of Artin--Schreier polynomials of the form $f(X) = X^{p^t} + aX + b \in \mathbb{F}_q[X]$ for the essential case $t=1$, $a=-1$, as posed in Problem \ref{pro1.1}. Our main result, Theorem \ref{thm1.2}, establishes that the inverse stability of $g(X) = X^p - X + b \in \mathbb{F}_q[X]$ is equivalent to the sequence-based condition $\operatorname{Tr}_{\fq}(a_n c_n^{-1}) \neq 0$ for all $n \geq 1$. This criterion enables us to directly verify the inverse stability of $g(X)$ by examining the absolute traces of specific sequences derived from the polynomial $g(X)$.

To characterize the inverse stability of $f(X) = X^{p^t} + aX + b$, we require $t \geq 1$ and $a \neq 0$. When $a = 0$, $f(X)$ is reducible, as any $p$-polynomial over $\fq$ with no nonzero roots in $\fq$ permutes $\fq$. Here, a $p$-polynomial has the form $\sum_i c_i X^{p^i} \in \fq[X]$. In this paper, we only address Problem \ref{pro1.1} in the case of $t=1$ and $a=-1$, but this case is fundamental. In fact, a complete solution to Problem \ref{pro1.1} can be derived using the methods developed in this paper and the following observations, without repeating the proof process:
\begin{enumerate}
\item For $t=1$ and $a \neq 0$, a well-known result (see, for example, \cite[Theorem 10.12]{[Wan]}) states that $X^p + aX + b$ is irreducible over $\fq$ if and only if there exists $a_0 \in \fq^*$ such that $a = -a_0^{p-1}$ and $\operatorname{Tr}(b/a_0^p) \neq 0$. By applying the same methods as in this paper, a characterization similar to Theorem \ref{thm1.2} can be obtained.
\item For $t \geq 2$ and $a \neq 0$, a result by Agou \cite{[Ago]} indicates that a necessary condition for $f(X)$ to be irreducible over $\fq$ is $t = p = 2$. Thus, it suffices to characterize the inverse stability of $X^4 + aX + b$ over $\mathbb{F}_{2^e}$. Moreover, according to Agou’s result, $X^4 + aX + b$ is irreducible over $\mathbb{F}
_{2^e}$ if and only if $e$ is odd, $a = a_0^3$, and $\operatorname{Tr}(b/a_0^2) \neq 0$ for some $a_0 \in \mathbb{F}_{2^e}^*$. Hence, the methods in this paper can be used to characterize the inverse stability of $X^4 + aX + b$ over $\mathbb{F}_{2^e}$.
\end{enumerate}

Finally, we note that when the sequences $\{a_n\}_{n \geq 1}$ and $\{c_n\}_{n\geq1}$ are periodic, as shown in Example \ref{exa1.4}, the condition $\operatorname{Tr}_{\fq}(a_n c_n^{-1}) \neq 0$ can be checked by computing the trace for a finite number of terms within one period, rendering the criterion practical. In contrast, for non-periodic sequences, a key difficulty emerges: verifying $\operatorname{Tr}_{\fq}(a_n c_n^{-1}) \neq 0$ for increasingly large $n$ does not guarantee inverse stability, as the condition must hold for all $n$. This requires examining ever-larger $n$ without a finite stopping point, presenting a significant practical challenge.

\section*{acknowledgment}
The author expresses gratitude to the anonymous referees for their insightful and valuable comments, which significantly enhanced the paper's presentation.

This work was conducted during the author’s academic visit to RICAM, Austrian Academy of Sciences. The author sincerely thanks Professor Arne Winterhof at RICAM for his thorough review of the manuscript and for offering invaluable suggestions.

This research was partially supported by the China Scholarship Council Fund (Grant No.\ 202301010002) and the Scientific Research Innovation Team Project of China West Normal University (Grant No.\ KCXTD2024-7).

%

\end{document}